\newtheorem{theorem}{Theorem}[section]
\newtheorem*{fibering lemma}{Fibering Lemma}
\newtheorem*{decomposition lemma}{Decomposition Lemma}
\newtheorem*{hurewicz theorem}{Hurewicz Theorem}
\newtheorem{proposition}[theorem]{Proposition}
\newtheorem{corollary}[theorem]{Corollary}
\theoremstyle{definition}
\newcommand{\df}[1]{{{\bf #1}}}
\newcommand{\RR}{\mathbb{R}}
\newcommand{\Ball}{\bar{B}}
\renewcommand{\epsilon}{\varepsilon}
\newcommand{\cliff}[1]{\todo[inline, color=yellow!40]{Cliff todo: #1}}
\newcommand{\VR}{\mathrm{VR}}
\DeclareMathOperator{\diam}{diam}
\DeclareMathOperator{\Cech}{\check{C}ech}
\DeclareMathOperator{\Conv}{Conv}
\DeclareMathOperator{\dgm}{dgm}
\begin{document}

\bibliographystyle{abbrv}

\title[Weighted persistent homology]{Weighted persistent homology}
\author{G.~Bell}
\address{Department of Mathematics and Statistics, The University of North Carolina at Greensboro, Greensboro, NC 27402, USA} 
\email{gcbell@uncg.edu}

\author{A.~Lawson}
\address{Department of Mathematics and Statistics, The University of North Carolina at Greensboro, Greensboro, NC 27402, USA} 
\email{azlawson@uncg.edu}

\author{J.~Martin}
\address{Department of Mathematics and Statistics, The University of North Carolina at Greensboro, Greensboro, NC 27402, USA} 
\email{jmmart27@uncg.edu}

\author{J.~Rudzinski}
\address{Department of Mathematics and Statistics, The University of North Carolina at Greensboro, Greensboro, NC 27402, USA} 
\email{jerudzin@uncg.edu}

\author{C.~Smyth }
\address{Department of Mathematics and Statistics, The University of North Carolina at Greensboro, Greensboro, NC 27402, USA} 
\email{cdsmyth@uncg.edu}
\thanks{Clifford Smyth was supported by NSA MSP Grant H98230-13-1-0222 and by a grant from the Simons Foundation (Grant Number 360486, CS)}
\begin{abstract}
We introduce weighted versions of the classical \v{C}ech and Vietoris-Rips complexes. We show that a version of the Vietoris-Rips Lemma holds for these weighted complexes and that they enjoy appropriate stability properties. We also give some preliminary applications of these weighted complexes.
\end{abstract}
\subjclass[2010]{55N35 (primary), 55U99 68U10 (secondary)}
\keywords{persistent homology}

\maketitle
\section{Introduction}

Topological data analysis (TDA) provides a means for the power of algebraic topology to be used to better understand the shape of a dataset. In the traditional approach to TDA, isometric balls of a fixed radius $r>0$ are centered at each data point in some ambient Euclidean space. One then constructs the nerve of the union of these balls and computes the simplicial homology of this nerve. Computationally, this approach is infeasible for large data sets or high-dimensional data, so instead one computes the so-called Vietoris-Rips complex, which is the flag complex over the graph obtained by placing an edge between any pair of vertices that are at distance no more than $2r$ from each other. The key idea of TDA is to allow the radius of these balls to vary and to compute simplicial homology for each value of this radius to create a topological profile of the space. This profile is encoded in either a barcode or a persistence diagram. Topological features such as holes or voids that exist for a relatively large interval of radii are said to persist and are believed to be more important than more transient features that exist for very short intervals of radii. (There are, however, important exceptions to this rule of thumb, see~\cite{Brains}).

In the traditional model, the radius of each ball is the same and can be modeled by the linear function of time $r(t) = r t$.  In this paper, we consider a model of computing persistent homology in which the radius of each ball is allowed to be a different monotonic function $r_x(t)$ at each point $x$.  In this way we can emphasize certain data points by assigning or {\em weighting} them with larger and/or more quickly growing balls and de-emphasize others by weighting them with smaller and/or more slowly growing balls. This is appropriate in the case of a noisy dataset, for instance, as an alternative to throwing away data that fails to meet some threshold of significance. Various other methods of enhancing persistence with weights have been considered (e.g.~\cite{Buchet,Edelsbrunner,Petri,Ren-Further,Ren}).

The weighted model we propose fits into the framework of generalized persistence in the sense of \cite{BdSS}. We show that it enjoys many of the properties familiar from the techniques of traditional persistent homology. We prove a weighted Vietoris-Rips Lemma (Theorem~\ref{thm:multiscale-rips}) that relates our weighted \v{C}ech and Rips complexes in the same way that they are related in the case of isometric balls. We also show that the persistent homology computed over weighted complexes is stable with respect to small perturbations of the rates of growth and/or the points in the dataset (Theorem~\ref{thm:stability}). Moreover, packages for computing persistent homology such as Javaplex~\cite{Javaplex} and Perseus~\cite{Perseus} are capable of handling our weighted persistence with the same complexity as unweighted persistence by merely adjusting inputs to the package functions.

As a proof of concept, we apply our methods to the Modified National Institute of Standards and Technology (MNIST) data set of handwritten digits translated into pixel information. Our method proves more effective than isometric persistence in finding the number $8$ from among these handwritten digits. (We chose $8$ for its unique $1$-dimensional homology among these digits.) We found our methods to be $95.8\%$ accurate as opposed to isometric persistence's $92.07\%$ accuracy. This experiment was chosen to demonstrate the performance of weighted persistence over usual persistence, but it should be noted that neither method approaches the accuracy of state-of-the art computer vision and we make no claim that we are improving on known methods.

In Section 2, we provide the background definitions that are needed for what follows and describe our weighted persistence model. In Section 3 we prove the weighted Vietroris-Rips Lemma and indicate how persistent homology packages can be used to compute weighted persistence. In Section 4 we establish our stability results. Our experiments on MNIST data appear in Section 5. We end with some remarks and questions for further study.

\section{Preliminaries}

We begin by defining some terminology and setting our notation. We will assume some familiarity with simplicial homology and the basic ideas of topological data analysis. For details, we refer to~\cite{Ede:10,Rotman}.

In algebraic topology, simplicial homology is a tool that assigns to any simplicial complex $K$ a collection of $\mathbb{Z}$-modules $H_0(K), H_1(K),\ldots$, called \df{homology groups}, in such a way that the rank of $H_n(K)$ describes the number of ``n-dimensional holes'' in $K$. For our purposes, we replace the standard definition in terms of $\mathbb{Z}$-modules with vector spaces (usually over the field with two elements, for ease of computation). We therefore refer to \df{homology vector spaces} instead of homology groups. We do not attempt to define $H_n(K)$ here, but instead refer to any text in algebraic topology, such as~\cite{Rotman}.

Let $\mathcal{U}$ be a collection of sets. We define the \df{nerve} $\mathcal{N}(\mathcal{U})$ to be the abstract simplicial complex with vertex set $\mathcal{U}$ with the property that the subset $\{U_0,U_1,\ldots, U_n\}$ of $\mathcal{U}$ spans an $n$-simplex in $\mathcal{N}$ whenever $\bigcap_{i=0}^nU_i\neq\emptyset$. 

Let $(X,d)$ be a metric space.  We define $B_r(x) = \{y \in X | d(x,y) <r\}$ and 
$\Ball_r(x)=  \{y \in X | d(x,y) \leq r\}$ to be the open and closed balls of radius $r$ about $x$, respectively. (Note that we're abusing notation since in a general metric space $\Ball_r(x)$ is not necessarily the closure of the open ball, usually denoted $B_r(x)$).  We most often consider examples where $X$ is a subset of $\mathbb{R}^d$ and $d(x,y) = \|x-y\|$ is the Euclidean distance between $x$ and $y$. For a real number $r\ge 0$, we define the \df{\v{C}ech complex of $X$ at scale $r$} by $\Cech(r)=\mathcal{N}\{\Ball_r(x)\mid x\in X\}$.  

We generalize this construction by allowing the radius of the ball around each element $x$ to depend on $x$. Let $\mathbf{r}:X\to[0,\infty)$ be any function. We define the \df{weighted $\mathbf{r}$-$\Cech$ complex} $\Cech(\mathbf{r})$ of $X$ by $\Cech(\mathbf{r})=\mathcal{N}\{\Ball_{\mathbf{r}(x)}(x)\}.$ 

In practice, it is difficult to determine whether an intersection of balls is nonempty. A much simpler construction to use is the Vietoris-Rips complex. For a given parameter $r\ge 0$ the \df{Vietoris-Rips complex} is the flag complex of the $1$-skeleton of the $\Cech$ complex, i.e. a collection of $n+1$ balls forms an $n$-simplex in the Vietoris-Rips complex if and only if the balls are pairwise intersecting.  For the Vietoris-Rips complex we identify each ball with its center, so that the \df{Vietoris-Rips complex at scale $r$} is $\VR(r) =\{\sigma\subset X\mid \diam(\sigma) \leq 2r\}.$ Similarly, if $\mathbf{r}:X\to [0,\infty)$, the \df{weighted $\mathbf{r}$-Vietoris-Rips complex} is $\VR(\mathbf{r}) =\{\sigma\subset X\mid d(x,y) \leq \mathbf{r}(x)+\mathbf{r}(y), \text{for all $x, y \in \sigma$ with $x \neq y$}\}$. 

Fix $\mathbf{r}:X\to[0,\infty)$ and consider the simplicial complex $\Cech(\mathbf{r})$ (or $\VR(\mathbf{r})$). Using simplicial homology with field coefficients, one can associate homology vector spaces $H_*\left(\Cech(\mathbf{r})\right)$ to these simplicial complexes. Whenever $t_0 \le t_1$ there is a natural inclusion map of simplicial complexes given by $\iota:\Cech(t_0\mathbf{r})\to \Cech(t_1\mathbf{r})$ (or the corresponding inclusion of the Vietoris-Rips complexes). By functoriality, there is an induced linear map on homology $\iota_*:H_*\Cech(t_0\mathbf{r}))\to H_*\Cech(t_1\mathbf{r})$.

Let $X\subset\mathbb{R}^d$ be finite. Although we defined the weighted complexes above for any function $\mathbf{r}:X\to[0,\infty)$, we want to study the persistence properties of these weighted complexes. For example, in the case of the weighted \v{C}ech complex, we want to study the evolution of homology as the radii of the balls grow to infinity. One straightforward way to do this would be to simply scale our weighted complexes linearly in the same way that one usually scales the isometric balls in persistent homology. We prefer a more flexible approach, which we describe in terms of radius functions. 

Let $\mathcal{C}^1_+=\mathcal{C}^1_+([0,\infty)]$ denote the collection of differentiable bijective functions $\phi:[0,\infty)\to[0,\infty)$ with positive first derivative. By a \df{radius function} on $X$ we mean a function $\mathbf{r}:X\to \mathcal{C}^1_+$. We denote the image function $\mathbf{r}(x)$ by $\mathbf{r}_x$.

For $t \geq 0$, we define the \df{$\Cech$ and Vietoris-Rips complexes at scale $t$} by \[\Cech_{\mathbf{r}}(t) = \mathcal{N}\{\Ball_{\mathbf{r}_x(t)}(x)\}\] and \[VR_{\mathbf{r}}(t) = \{\sigma \subset X | d(x,y) \leq \mathbf{r}_x(t) + \mathbf{r}_y(t) \text{ for all $x,y \in \sigma$ with $x\neq y$}\},\] respectively.
 We define the \df{entry function}, \begin{equation} \label{eq:entry_function}
      f_{X,\mathbf{r}}(y)=\min_{x\in X}\{\mathbf{r}^{-1}_{x}(d(y,x))\}\hbox{.}\end{equation} 
      
      This function captures the scale $t$ at which the point $y\in\mathbb{R}^d$ is first captured by some ball $\Ball_{\mathbf{r}_x(t)}(x)$; we have $f_{X, \mathbf{r}}(y) = t$ if and only if $y \in \Ball_{\mathbf{r}_x(t)}(x)$ for some $x$ in $X$ and $y \not \in \bigcup_{x \in X} B_{\mathbf{r}_x(t)}(x)$. Thus we have the following proposition.

\begin{proposition}
Let $X$ be a finite subset of some Euclidean space $\mathbb{R}^d$. Suppose that $\mathbf{r}$ and $f_{X,\mathbf{r}}$ are defined as above. Then, \[f^{-1}_{X,\mathbf{r}}\left( [0,t]\right)=\bigcup_{x \in X}\Ball\left(x,\mathbf{r}_{x}(t)\right)\hbox{.}\]
\end{proposition}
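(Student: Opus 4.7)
The plan is to unwind the definitions and use the fact that each $\mathbf{r}_x$ lies in $\mathcal{C}^1_+$, so it is a strictly increasing bijection $[0,\infty)\to[0,\infty)$, whence $\mathbf{r}_x^{-1}$ is also strictly increasing. I would prove the set equality by double containment, but since the chain of equivalences is clean, a single biconditional argument suffices.

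First I would fix $y \in \mathbb{R}^d$ and observe the chain
\[
y \in f^{-1}_{X,\mathbf{r}}([0,t]) \iff f_{X,\mathbf{r}}(y) \le t \iff \min_{x\in X}\bigl\{\mathbf{r}_x^{-1}(d(y,x))\bigr\} \le t.
\]
Because $X$ is finite, the minimum is attained, so the last condition is equivalent to the existence of some $x\in X$ with $\mathbf{r}_x^{-1}(d(y,x)) \le t$. Here I would invoke the defining property of $\mathcal{C}^1_+$: since $\mathbf{r}_x$ is strictly increasing (positive first derivative) and bijective on $[0,\infty)$, applying $\mathbf{r}_x$ to both sides preserves the inequality, giving the equivalent condition $d(y,x) \le \mathbf{r}_x(t)$, i.e., $y \in \Ball(x,\mathbf{r}_x(t))$.

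Thus $y \in f^{-1}_{X,\mathbf{r}}([0,t])$ if and only if $y \in \Ball(x,\mathbf{r}_x(t))$ for some $x\in X$, which is exactly the statement $y \in \bigcup_{x \in X} \Ball(x,\mathbf{r}_x(t))$. This completes the proof.

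There is no real obstacle here — the statement is essentially a definitional unpacking of the entry function, and the only nontrivial input is the monotonicity of $\mathbf{r}_x^{-1}$, which is built into the choice of $\mathcal{C}^1_+$. The only small subtlety worth flagging in the write-up is the use of finiteness of $X$ to ensure that the infimum in the definition of $f_{X,\mathbf{r}}$ is actually attained (so that one can replace ``$\min \le t$'' by ``there exists $x$ with $\mathbf{r}_x^{-1}(d(y,x)) \le t$'' without epsilon-arguments).
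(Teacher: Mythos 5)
Your proof is correct and follows the same route the paper takes: the paper presents this proposition as an immediate consequence of unwinding the definition of the entry function (using that each $\mathbf{r}_x$ is a strictly increasing bijection so that $\mathbf{r}_x^{-1}(d(y,x))\le t$ iff $d(y,x)\le \mathbf{r}_x(t)$), which is exactly your chain of equivalences. Your remark about finiteness of $X$ guaranteeing the minimum is attained is a reasonable point to flag, though for a finite index set it is automatic.
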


It follows from the Nerve Lemma (see for example, \cite[Corollary 4G.3]{Hatcher}), that $\Cech_{\mathbf{r}}(t)$ is homotopy equivalent to $f^{-1}_{X,\mathbf{r}}\left([0,t]\right)$.

\section{A weighted Vietoris-Rips lemma}

The Vietoris-Rips complex is much easier to compute than the \v{C}ech complex in high dimensions. To determine whether $n+1$ balls form an $n$-simplex in the \v{C}ech complex, we must check whether the balls intersect, a computationally complex problem. To determine whether $n+1$ balls $B_{r_i}(x_i)$ form a simplex in the Vietoris-Rips complex is computationally easy, only $\binom{n+1}{2}$ conditions $d(x_i,x_j) \leq r_i +r_j$ need be checked.  Furthermore, if there are $m$ points in $X$, it may be necessary to check all $2^m$ sub-collections of balls to determine the \v{C}ech complex, whereas determining the Rips complex will only require checking $\binom{m}{2}$ pairs of points.

Our weighted \v{C}ech and Vietoris-Rips complexes are similar in spirit to weighted alpha complexes~\cite[III.4]{Ede:10}. Both constructions seek to permit ``balls'' with different sizes. Our constructions are simpler from a conceptual standpoint since the alpha complexes are built as subcomplexes of the Delaunay complex, which comes from the Voronoi diagram. Moreover, our complexes are computationally simple; indeed our method of finding weighted Vietoris-Rips complexes requires only marginally more computation than the unweighted Vietoris-Rips complex.

In particular, Javaplex and Perseus can compute regular (unweighted) persistent homology given input of a distance matrix $M$ with $M_{i,j}=d(x_i,x_j)$. Inputting $M_{i,j}=d(x_i,x_j)/(r_i+r_j)$ allows these packages to compute the persistent homology with $r_{x_i}(t)=r_it$ in the same time.

In computational problems it is common to use the Vietoris-Rips complex instead of the $\Cech$ complex to simplify the calculational overhead. The following theorem justifies this decision by saying that the Vietoris-Rips complex is ``close'' to the weighted \v{C}ech complex. 

The classical Vietoris-Rips Lemma can be stated as follows:
\begin{theorem}\cite{dSG-2007}
Let $X$ be a set of points in $\mathbb{R}^d$ and let $t > 0$.  Then \[\VR(t') \subseteq \Cech(t) \subseteq \VR(t)\] whenever $0< t' \le t \left(\sqrt{2d/(d+1)}\right)^{-1}$.
\end{theorem}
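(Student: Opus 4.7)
The plan is to prove the two containments separately. The inclusion $\Cech(t) \subseteq \VR(t)$ is the easy one: if $\{x_0,\ldots,x_n\}$ spans a simplex of $\Cech(t)$, then $\bigcap_{i} \Ball_t(x_i) \neq \emptyset$, so picking any $y$ in the intersection and applying the triangle inequality gives $d(x_i,x_j) \le d(x_i,y) + d(y,x_j) \le 2t$ for all $i,j$. Thus $\{x_0,\ldots,x_n\}$ is a simplex in $\VR(t)$. This direction holds for any $t$ and needs no dimensional hypothesis.

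For the harder inclusion $\VR(t') \subseteq \Cech(t)$, the key ingredient is Jung's theorem from convex geometry: any subset of $\mathbb{R}^d$ of diameter $D$ is contained in a closed ball of radius $D\sqrt{d/(2(d+1))}$. Given a simplex $\sigma = \{x_0,\ldots,x_n\}$ in $\VR(t')$, the pairwise distance condition $d(x_i,x_j) \le 2t'$ implies $\diam(\sigma) \le 2t'$. Jung's theorem then provides a point $c \in \mathbb{R}^d$ with $d(c,x_i) \le 2t' \cdot \sqrt{d/(2(d+1))} = t'\sqrt{2d/(d+1)}$ for every $i$. The hypothesis $t' \le t \bigl(\sqrt{2d/(d+1)}\bigr)^{-1}$ rearranges exactly to $t'\sqrt{2d/(d+1)} \le t$, so $c \in \bigcap_{i} \Ball_t(x_i)$, witnessing that $\sigma \in \Cech(t)$.

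The main obstacle is really just invoking the correct sharp constant from Jung's theorem; everything else is a triangle inequality or an algebraic manipulation. One should also double-check the boundary behavior (closed versus open balls) to make sure the non-strict inequality $d(c,x_i) \le t$ is enough to place $c$ in the closed ball $\Ball_t(x_i)$, which matches the convention $\Ball_r(x) = \{y : d(x,y) \le r\}$ adopted in Section~2. After these two containments, the theorem follows immediately by combining them.
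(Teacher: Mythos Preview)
Your proof is correct. Note, though, that the paper does not actually prove this classical statement --- it is quoted with a citation to \cite{dSG-2007} --- so there is no ``paper's own proof'' of exactly this theorem to compare against. What the paper \emph{does} prove is the weighted generalization (Theorem~\ref{thm:multiscale-rips}), and specializing that argument to the constant weight $\mathbf{r}\equiv 1$ yields a proof of the classical result that is genuinely different from yours.

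Your route is to invoke Jung's theorem as a black box: a set of diameter $D$ in $\mathbb{R}^d$ sits in a closed ball of radius $D\sqrt{d/(2(d+1))}$, and the algebra falls out immediately. The paper cannot do this in the weighted setting because there is no off-the-shelf ``weighted Jung theorem'', so it effectively reproves one from scratch. It minimizes $f(y)=\max_j \|x_j-y\|/\mathbf{r}(x_j)$ over $\mathbb{R}^d$, uses the Separation Theorem to show the minimizer $y_0$ lies in the convex hull of the vertices where the maximum is attained, applies Carath\'eodory to cut down to at most $d+1$ such vertices, and then does a pigeonhole-plus-inequality argument on the barycentric coefficients to bound $f(y_0)$ by $t'\sqrt{2d/(d+1)}$. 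With $\mathbf{r}\equiv 1$ this collapses to the standard proof of Jung's theorem.

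So your argument is shorter and cleaner for the unweighted case precisely because Jung's theorem packages all the convex geometry; the paper's longer argument is the price of generality. Both give the same sharp constant, and your handling of the closed-ball convention is fine.
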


The main result of this section is an extension of this result to the weighted case.  

\begin{theorem}[Weighted Vietoris-Rips Lemma] \label{thm:multiscale-rips} Let $X$ be a set of points in $\RR^d$.  Let $\mathbf{r}: X \to (0,\infty)$ be the corresponding weight function and let $t >0$. Then \[\VR(t'\mathbf{r}) \subseteq \Cech(t \mathbf{r}) \subseteq \VR(t \mathbf{r})\] whenever $0< t' \le t \left(\sqrt{2d/(d+1)}\right)^{-1}$.
\end{theorem}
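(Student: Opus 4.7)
The right inclusion is easy: any $z \in \bigcap_i \Ball_{t r_i}(x_i)$, where $r_i := \mathbf{r}(x_i)$, gives $\|x_i - x_j\| \le \|x_i - z\| + \|z - x_j\| \le t(r_i + r_j)$ by the triangle inequality. Hence the substantive inclusion is $\VR(t'\mathbf{r}) \subseteq \Cech(t\mathbf{r})$, a weighted Jung-type statement that I would approach as a generalization of the classical Rips lemma argument.

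Fix a simplex $\sigma = \{x_0, \ldots, x_n\} \in \VR(t'\mathbf{r})$, so $\|x_i - x_j\| \le t'(r_i + r_j)$ for all $i \ne j$. The plan is to take $z$ to be the minimizer $z^*$ of the convex function $F(z) = \max_i \|z - x_i\|/r_i$ on $\RR^d$ and show $t^* := F(z^*) \le t$. Standard KKT/subdifferential analysis of this Chebyshev-type problem yields $z^* = \sum_{i \in I} \mu_i x_i$ as a convex combination ($\mu_i > 0$, $\sum \mu_i = 1$) supported on the active set $I = \{i : \|z^* - x_i\|/r_i = t^*\}$; by Carath\'eodory one may further assume $|I| \le d+1$.

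Combining this representation with the centroid identity
\[
\sum_i \mu_i \|z^* - x_i\|^2 \;=\; \tfrac{1}{2}\sum_{i,j} \mu_i \mu_j \|x_i - x_j\|^2
\]
(valid whenever $\sum \mu_i = 1$ and $\sum \mu_i x_i = z^*$), the left side equals $(t^*)^2 \sum_i \mu_i r_i^2$ since $\|z^* - x_i\| = t^* r_i$ for $i \in I$. Applying the Rips hypothesis to the right side (only the $i \ne j$ terms contribute) and expanding $(r_i + r_j)^2$ produces, after elementary algebra,
\[
(t^*)^2 \sum_i \mu_i r_i^2 \;\le\; (t')^2\Bigl[\sum_i \mu_i r_i^2 + \bigl(\sum_i \mu_i r_i\bigr)^2 - 2\sum_i \mu_i^2 r_i^2\Bigr].
\]

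The main obstacle is controlling $(\sum_i \mu_i r_i)^2$ sharply enough to recover Jung's constant $\sqrt{2d/(d+1)}$. A naive Cauchy-Schwarz using $\sum \mu_i = 1$ gives only $(\sum \mu_i r_i)^2 \le \sum \mu_i r_i^2$, which would yield the dimension-free constant $\sqrt{2}$. To recover the sharp bound one must also exploit $|I| \le d+1$ through the complementary Cauchy-Schwarz inequality $(\sum \mu_i r_i)^2 \le |I|\sum \mu_i^2 r_i^2 \le (d+1)\sum \mu_i^2 r_i^2$, then interpolate the two: taking the first bound with weight $\tfrac{d-1}{d+1}$ and the second with weight $\tfrac{2}{d+1}$ yields
\[
\bigl(\sum_i \mu_i r_i\bigr)^2 \;\le\; \tfrac{d-1}{d+1}\sum_i \mu_i r_i^2 + 2\sum_i \mu_i^2 r_i^2.
\]
Substituting this into the previous display collapses the bracket to $\tfrac{2d}{d+1}\sum_i \mu_i r_i^2$, giving $t^* \le t'\sqrt{2d/(d+1)} \le t$, as required.
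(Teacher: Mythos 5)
Your proof is correct, and the core of it is genuinely different from the paper's. Both arguments share the same setup: minimize the weighted Chebyshev function $F(z)=\max_i\|z-x_i\|/r_i$, observe that the minimizer $z^*$ lies in the convex hull of the active points (the paper gets this via a separation-theorem perturbation argument where you invoke the subdifferential condition $0\in\partial F(z^*)$ --- these are equivalent), and reduce to a support of size at most $d+1$ by Carath\'eodory. Where you diverge is in extracting the bound $F(z^*)\le t'\sqrt{2d/(d+1)}$. The paper singles out a distinguished \emph{pair} of support points: it relabels so that $a_0\mathbf{r}(x_0)$ is maximal among the $a_j\mathbf{r}(x_j)$, picks an index $j_0$ whose cross term $-\tfrac{a_{j_0}}{a_0}\widehat{x_0}\cdot\widehat{x_{j_0}}$ is at least the average $\tfrac1m\mathbf{r}(x_0)^2f(y_0)^2$, and then runs the estimate through the single edge $\|x_0-x_{j_0}\|$ together with the inequality $a_0/a_{j_0}\ge\mathbf{r}(x_{j_0})/\mathbf{r}(x_0)$. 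You instead average over \emph{all} pairs via the centroid identity $\sum_i\mu_i\|z^*-x_i\|^2=\tfrac12\sum_{i,j}\mu_i\mu_j\|x_i-x_j\|^2$ and then control the cross term $(\sum_i\mu_i r_i)^2$ by a convex interpolation, with weights $\tfrac{d-1}{d+1}$ and $\tfrac{2}{d+1}$, of the two Cauchy--Schwarz bounds $(\sum_i\mu_i r_i)^2\le\sum_i\mu_i r_i^2$ and $(\sum_i\mu_i r_i)^2\le(d+1)\sum_i\mu_i^2 r_i^2$; I checked the algebra and the bracket does collapse to $\tfrac{2d}{d+1}\sum_i\mu_i r_i^2$ exactly. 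Your route buys symmetry and brevity --- it avoids the paper's two rounds of relabeling and the somewhat delicate quadratic-form verification $(d-1+4a_0/a_1)\mathbf{r}(x_0)^2-2(d+1)\mathbf{r}(x_0)\mathbf{r}(x_1)+(d-1)\mathbf{r}(x_1)^2\ge0$ --- at the cost of invoking slightly heavier standard machinery (subdifferential calculus and the centroid identity) where the paper is more hands-on. Two small points to tidy up if you write this in full: dispose explicitly of the degenerate case $F(z^*)=0$ (all vertices coincide, where the subdifferential argument is not literally a gradient computation), and note that after Carath\'eodory the retained points are still active, so $\|z^*-x_i\|=t^*r_i$ persists on the reduced support.
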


\begin{proof}
The second containment $\Cech(t \mathbf{r}) \subseteq \VR(t \mathbf{r})$ follows from the fact that the weighted Vietoris-Rips complex is the flag complex of the weighted \v{C}ech complex. 
        
To show that $\VR(t'\mathbf{r})\subset \Cech(t\mathbf{r})$, we suppose there is some finite collection $\sigma = \{x_k\}_{k=0}^\ell \subseteq \RR^d$ with $\ell >0$ that is a simplex in $\VR(t' \mathbf{r})$ and show that this is also a simplex in $\Cech(t \mathbf{r})$. We have $\|x_i-x_j\|_2 \le t' (\mathbf{r}(x_i) + \mathbf{r}(x_j))$ whenever $i \neq j$.

Define a function $f:\RR^d \to \RR$ by \[f(y) = \max_{0 \leq j \leq \ell} \left\{\frac{\|x_j - y\|_2}{\mathbf{r}(x_j)}\right\}.\] Clearly, $f$ is continuous and $f(y) \to \infty$ as $\|y\|_2 \to \infty$. Thus $f$ attains a minimum (say at $y_0$) on some compact set containing $\Conv(\{x_k\}_{k=0}^{\ell})$. (Here $\Conv(S)$ is the convex hull of the set $S \subseteq {\mathbb R}^d$.) We must have $\|x_i-y_0\|_2/ r(x_i) = f(y_0)$ for at least one of the vertices $x_i$. By reordering the vertices, we may assume that  \[f(y_0) = \frac{1}{\mathbf{r}(x_j)} \|x_j - y_0\|_2\qquad \hbox{if $0\le j\le n$}\] and \[f(y_0) > \frac{1}{\mathbf{r}(x_j)} \|x_j - y_0\|_2\qquad \hbox{if $n<j\le \ell$.}\] Let \[g(y) = \max_{0 \leq j \leq n} \left\{\frac{1}{\mathbf{r}(x_j)}\left\|x_j - y\right\|_2\right\}\] and \[h(y) = \max_{n < j \leq \ell} \left\{\frac{1}{\mathbf{r}(x_j)}\left\|x_j - y\right\|_2\right\}.\]
		
Now we wish to show that $y_0 \in \mathrm{Conv}(\{x_j\}_{j=0}^{n})$. To this end we apply the Separation Theorem \cite{Matousek} to obtain: either $y_0 \in \Conv(\{x_j\}_{j=0}^{n})$ or there is a $v \in \RR^d$ and a $C<0$ such that $v \cdot x_j \ge 0$ for all $0 \le j \le n$ and $v \cdot y_0 < C$. Thus if $y_0 \not \in \Conv(\{x_j\}_{j=0}^{n})$ there is a $v \in \RR^d$ so that $v \cdot (x_j - y_0) > 0$ for $0 \leq j \leq n$. We suppose that there is such a $v$ and derive a contraction.

Since \[\left\|x_j - (y_0 + \lambda v)\right\|_2^2 = \left\|x_j - y_0\right\|_2^2 - 2 \lambda v \cdot (x_j - y_0) + \lambda^2 \left\|v\right\|_2^2\] for each $0 \leq j \leq n$, it follows that $g(y_0 + \lambda v) < f(y_0)$ for all $0 < \lambda < \lambda_1$, where \[\lambda_1 = \min_{0 \leq j \leq n} \left\{\frac{2 v \cdot (x_j - y_0)}{\left\|v\right\|_2^2}\right\}.\] Since $h(y)$ is continuous and $h(y_0) < f(y_0)$, there exists a $\lambda_2$ so that $h(y_0 + \lambda v) < f(y_0)$ for $0 < \lambda < \lambda_2$. Thus, there exists a $\lambda > 0$ such that \[f(y_0 + \lambda v) = \max\left\{g(y_0 + \lambda v), h(y_0 + \lambda v)\right\} < f(y_0)\hbox{,}\] contradicting the minimality of $y_0$.

By Carath\'{e}odory's theorem \cite{Matousek} and reordering of vertices if necessary, $y_0$ is a convex combination of some subcollection of vertices $\{x_j\}_{j=0}^{m}$ where $m \leq \min\{d,n\}$. It is not possible that $m = 0$. If so, then $y_0 = x_0$ and $f(y_0) = \frac{1}{\mathbf{r}(x_0)}\|x_0 - y_0\|_2 = 0$ and f is identically zero. Since $\sigma$ has dimension at least $1$, it contains a vertex $x_1 \neq x_0$. It follows that $f(y_0) = f(x_0) > \frac{1}{\mathbf{r}(x_1)} \|x_1 - x_0\|_2 > 0$, which is a contradiction. 
		
Let $\widehat{x_j} = x_j - y_0$ for all $0 \leq j \leq m$. Note that 
\begin{equation}
\|\widehat{x_j}\|_2^2 = \mathbf{r}(x_j)^2 f(y_0)^2.
\end{equation} 
Since $y_0 \in \Conv(\{x_j\}_{j=0}^m)$, $y_0 = \sum_{j=0}^{m} a_j x_j$ for some set of non-negative real numbers $a_0,\ldots, a_m$ that sum to $1$. Thus $\sum_{j=0}^{m} a_j \widehat{x_j}=0$. By relabeling, we may assume that $a_0 \mathbf{r}(x_0) \ge a_j\mathbf{r}(x_j)$ when $j > 0$. Necessarily $a_0 >0$. (Otherwise $a_j = 0$ for all $0 \leq j \leq m$, a contradiction.) Then, \[\widehat{x_0} = -\sum\limits_{j=0}^{m} \frac{a_j}{a_0} \widehat{x}_j\] and so \[\mathbf{r}(x_0)^2 f(y_0)^2 = \|\widehat{x_0}\|_2^2 = - \sum\limits_{j=0}^{m} \frac{a_i}{a_0} \widehat{x}_0 \cdot \widehat{x_j}.\]
		
Among the indices $1,2,\ldots,m$, there is some $j_0$ such that
\begin{equation}
\frac{1}{d} \mathbf{r}(x_0)^2 f(y_0)^2 \leq \frac{1}{m} \mathbf{r}(x_0)^2 f(y_0)^2 \leq -\frac{a_{j_0}}{a_0} \widehat{x_0} \cdot \widehat{x_{j_0}}.
\end{equation}
We must have $a_{j_0}>0$.  (Otherwise, $f(y_0) = 0$, which, as shown earlier, is a contradiction.)
By reordering, we may assume $j_0 = 1$.
Putting (1) and (2) together, we find 
\begin{align*}
f(y_0)^2 \left( \mathbf{r}(x_0)^2 + \frac{2 a_0 \mathbf{r}(x_0)^2}{a_1 d} + \mathbf{r}(x_{1})^2\right) & = f(y_0)^2\mathbf{r}(x_0)^2+\frac{2a_0f(y_0)^2\mathbf{r}(x_0)^2}{a_1d}+f(y_0)^2\mathbf{r}(x_1)^2 \\
&\leq \left\|\widehat{x_0}\right\|_2^2 - 2 \widehat{x_0} \cdot \widehat{x_1} + \left\|				\widehat{x_1}\right\|_2^2 \\
			&= \left\|\widehat{x_0} - \widehat{x_1}\right\|_2^2 \\
			&= \left\|x_0 - x_1\right\|_2^2 \\
			&\leq \left(t' \left(\mathbf{r}(x_0)+\mathbf{r}(x_1)\right)\right)^2.
\end{align*}

We will now show that \[\frac{f(y_0)^2}{t'} \leq \frac{(\mathbf{r}(x_0)^2 + \mathbf{r}(x_1)^2)^2}{\mathbf{r}(x_0)^2 + \frac{2 a_0 \mathbf{r}(x_0)^2}{a_1 d} + \mathbf{r}(x_1)^2} \leq \frac{2d}{d+1}.\] It suffices to show, after cross-multiplying the right-hand inequality, that \[(d - 1 + 4\frac{a_0}{a_1}) \mathbf{r}(x_0)^2 - 2(d+1) \mathbf{r}(x_0) \mathbf{r}(x_1) + (d-1) \mathbf{r}(x_1)^2 \geq 0.\] Since $\dfrac{a_0}{a_1} \geq \dfrac{\mathbf{r}(x_1)}{\mathbf{r}(x_0)}$ we get
\begin{align*}
\left(d - 1 + 4\frac{a_0}{a_1}\right) \mathbf{r}(x_0)^2 &- 2(d+1) \mathbf{r}(x_0) \mathbf{r}(x_1) + (d-1) \mathbf{r}(x_1)^2 \\
			&\geq \left(d - 1 + 4\frac{\mathbf{r}(x_1)}{\mathbf{r}(x_0)}\right) \mathbf{r}(x_0)^2 - 2(d+1) \mathbf{r}(x_0) \mathbf{r}(x_1) + (d-1) \mathbf{r}(x_1)^2 \\
			&= (d-1)\left(\mathbf{r}(x_0) - \mathbf{r}(x_1)\right)^2 \\
			&\geq 0
\end{align*}
as desired. Our assumption that $t' \le t (\sqrt{2d/(d+1)})^{-1}$ implies $f(y_0) \leq t$ and thus \[y_0 \in \bigcap\limits_{i = 0}^{\ell} \Ball_{t \mathbf{r}(x_i)} (x_i).\] Therefore $\sigma \in \check{C}(t\mathbf{r})$ and we are done.
	\end{proof}

\section{Stability} 

In this section we discuss the stability of our weighted persistence.  Let $X$ and $Y$ be finite subsets of $\mathbb{R}^d$ with corresponding radii functionals ${\mathbf r}: X \to \mathcal{C}^1_+$ and $\mathbf{s}: Y \to \mathcal{C}^1_+$.  Informally, we show that if $(X, \mathbf{r})$ and $(Y, \mathbf{s})$ are ``close'', i.e. are small perturbations of each other, then the corresponding entry functions $f_{X, \mathbf{r}}$ and $f_{Y,\mathbf{s}}$ (see \eqref{eq:entry_function}) are also ``close'' and hence the associated persistence diagrams must also be ``close''.  We'll now make the the definitions of these various types of closeness precise.

Let $\eta \subseteq X \times Y$ be a relation such that for every $x \in X$ there is a $y \in Y$ with $(x,y) \in \eta$ and for every $y \in Y$ there is an $x \in X$ with $(x,y) \in \eta$.  We measure the closeness of $X$ and $Y$ with respect to $\eta$ by \[\|\eta\| := \max_{(x,y) \in \eta} d(x,y).\]  If $L$ is any compact set and $h : L \to \mathbb{R}$ is continuous let \[\|h\|_L := \max_{x \in L} |h(x)|.\] Let $K$ be a compact subset of $\mathbb{R}^d$ that contains $X \cup Y$.  The closeness of $\mathbf{r}$ and $\mathbf{s}$ is measured by \[D(\mathbf{r}, \mathbf{s})_{\eta,K} := \max_{(x,y) \in \eta} \|\mathbf{r}_x^{-1} - \mathbf{s}_y^{-1}\|_{[0,\diam(K)]}.\]  
The closeness of $f_{X, \mathbf{r}}$ and $f_{Y,\mathbf{s}}$ is measured by $\|f_{X, \mathbf{r}} - f_{Y,\mathbf{s}}\|_K.$  We also define $S(\mathbf{r})_K := \max_{x \in X} \|(\mathbf{r}^{-1}_x)'\|_{[0,\diam(K)]}$.

As is common, we measure the closeness of persistence diagrams by the bottle-neck distance.  We'll give the definition of this metric in the remarks leading up to Theorem 4.5.

\begin{theorem} \label{thm:stability}
In the above notation we have the following bound on entry functions (see \eqref{eq:entry_function}): 

\[\|f_{X,\mathbf{r}}-f_{Y,\mathbf{s}}\|_K\le D(\mathbf{r},\mathbf{s})_{\eta,K}+ \|\eta\| \max( S(\mathbf{r})_K, S(\mathbf{s})_K)\]
\end{theorem}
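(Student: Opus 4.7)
The plan is to bound $|f_{X,\mathbf{r}}(z) - f_{Y,\mathbf{s}}(z)|$ pointwise for each $z \in K$, then take the supremum. By the symmetry of the right-hand side under the swap $(X,\mathbf{r}) \leftrightarrow (Y,\mathbf{s})$, it is enough to bound $f_{X,\mathbf{r}}(z) - f_{Y,\mathbf{s}}(z)$ from above by $D(\mathbf{r},\mathbf{s})_{\eta,K} + \|\eta\|\, S(\mathbf{r})_K$; the reverse inequality follows by reversing roles and using $S(\mathbf{s})_K$, and the maximum absorbs both.

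To carry out the one-sided bound, pick $y^* \in Y$ realizing the minimum in the definition of $f_{Y,\mathbf{s}}(z)$, so that $f_{Y,\mathbf{s}}(z) = \mathbf{s}_{y^*}^{-1}(d(z,y^*))$. Using the hypothesis on $\eta$, choose $x^* \in X$ with $(x^*, y^*) \in \eta$. Since $f_{X,\mathbf{r}}(z) \le \mathbf{r}_{x^*}^{-1}(d(z,x^*))$ by definition, we get
\[
f_{X,\mathbf{r}}(z) - f_{Y,\mathbf{s}}(z) \le \mathbf{r}_{x^*}^{-1}(d(z,x^*)) - \mathbf{s}_{y^*}^{-1}(d(z,y^*)).
\]
Now add and subtract $\mathbf{r}_{x^*}^{-1}(d(z,y^*))$ to split this into two pieces: a "radius function swap" term $\mathbf{r}_{x^*}^{-1}(d(z,y^*)) - \mathbf{s}_{y^*}^{-1}(d(z,y^*))$, and a "basepoint swap" term $\mathbf{r}_{x^*}^{-1}(d(z,x^*)) - \mathbf{r}_{x^*}^{-1}(d(z,y^*))$.

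The first piece is bounded by $\|\mathbf{r}_{x^*}^{-1} - \mathbf{s}_{y^*}^{-1}\|_{[0,\diam(K)]} \le D(\mathbf{r},\mathbf{s})_{\eta,K}$, since $z, y^* \in K$ implies $d(z,y^*) \le \diam(K)$. For the second piece, the mean value theorem (applicable because $\mathbf{r}_{x^*}^{-1} \in \mathcal{C}^1_+$) yields
\[
\mathbf{r}_{x^*}^{-1}(d(z,x^*)) - \mathbf{r}_{x^*}^{-1}(d(z,y^*)) = (\mathbf{r}_{x^*}^{-1})'(\xi)\bigl(d(z,x^*) - d(z,y^*)\bigr)
\]
for some $\xi$ between $d(z,x^*)$ and $d(z,y^*)$, both of which lie in $[0,\diam(K)]$. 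The triangle inequality gives $|d(z,x^*) - d(z,y^*)| \le d(x^*,y^*) \le \|\eta\|$, and $|(\mathbf{r}_{x^*}^{-1})'(\xi)| \le S(\mathbf{r})_K$ by definition, so this piece is bounded by $\|\eta\|\, S(\mathbf{r})_K$.

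Adding the two estimates gives the one-sided bound; the symmetric argument (choose $x^* \in X$ realizing the min for $f_{X,\mathbf{r}}(z)$, then pair it with some $y^*$) completes the pointwise inequality, and taking the max over $z \in K$ finishes the proof. I don't anticipate a serious obstacle here; the only thing to keep an eye on is making sure $\xi$ lies in $[0,\diam(K)]$ so that the definition of $S(\mathbf{r})_K$ applies, which is immediate since $z, x^*, y^* \in K$.
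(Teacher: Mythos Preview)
Your proposal is correct and follows essentially the same approach as the paper: pick the minimizer on one side, pair it via $\eta$ with a point on the other side, use the minimum to replace, then split into a ``radius-function swap'' term bounded by $D(\mathbf{r},\mathbf{s})_{\eta,K}$ and a ``basepoint swap'' term bounded via the mean value theorem by $\|\eta\|$ times the relevant derivative sup. The only cosmetic difference is that you add and subtract $\mathbf{r}_{x^*}^{-1}(d(z,y^*))$ (so the MVT lands on $\mathbf{r}$), whereas the paper adds and subtracts $\mathbf{s}_y^{-1}(d(z,x'))$ (so the MVT lands on $\mathbf{s}$); both choices are absorbed by the $\max(S(\mathbf{r})_K, S(\mathbf{s})_K)$ in the final bound.
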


\begin{proof}


There is some point $z$ in the compact set $K$ and some points $x \in X$ and $y \in Y$ so that
\[\|f_{X,\mathbf{r}}-f_{Y,\mathbf{s}}\|_{K}=|f_{X,\mathbf{r}}(z)-f_{Y,\mathbf{s}}(z)| = |\mathbf{r}^{-1}_{x}(d(z,x))-\mathbf{s}^{-1}_y(d(z,y))|.\]

We first suppose $\mathbf{r}^{-1}_{x}(d(z,x)) \geq \mathbf{s}^{-1}_{y}(d(z,y))$. Let $x' \in X$ such that $(x',y) \in \eta$.  Since $f_{X,\mathbf{r}}$ is a minimum, $\mathbf{r}^{-1}_{x'}(d(z,x')) \geq \mathbf{r}^{-1}_{x}(d(z,x))$ and we have
\begin{equation} \label{eq:ineq} \|f_{X,\mathbf{r}}-f_{Y,\mathbf{s}}\|_{K} \leq |\mathbf{r}^{-1}_{x'}(d(z,x'))-\mathbf{s}^{-1}_{y}(d(z,y))| \end{equation}
\[\leq |\mathbf{r}^{-1}_{x'}(d(z,x'))-\mathbf{s}^{-1}_{y}(d(z,x'))| + |\mathbf{s}^{-1}_{y}(d(z,x'))-\mathbf{s}^{-1}_{y}(d(z,y))|\]
Since $d(z,x') \in [0,\diam(K)]$, 
\[|\mathbf{r}^{-1}_{x'}(d(z,x'))-\mathbf{s}^{-1}_{y}(d(z,x'))| \leq D(\mathbf{r},\mathbf{s})_{\eta,K}.\]
Since $|d(z,x') - d(z,y)| \leq d(x',y) \leq \|\eta\|$ we apply the mean value theorem to obtain the bound
\[|\mathbf{s}^{-1}_{y}(d(z,x'))-\mathbf{s}^{-1}_{y}(d(z,y))|\le \|\eta\| \cdot \|(\mathbf{s}^{-1}_y)'\|_{[0,\diam(K)]} \leq \|\eta\| \max( S(\mathbf{r})_K, S(\mathbf{s})_K).\]
Together, these last two bounds give the bound of the theorem.  A similar argument gives the same bound if $\mathbf{r}^{-1}_{x}(d(z,x)) \leq \mathbf{s}^{-1}_{y}(d(z,y))$.
\end{proof}

If one has free choice of the perturbed set $(Y,\mathbf{s})$ it is clear that $\|f_{X,\mathbf{r}} - f_{Y,\mathbf{s}}\|_K$ can be made arbitrarily large. This could be done, say by adding a point to $Y$ that is arbitrarily far from any point in $X$ or by making one $\mathbf{s}_y$ arbitrarily larger than any $\mathbf{r}_x$.  The upper bound of Theorem 4.1 is also a bound on how extreme such perturbations may be.

We have the following immediate corollary of Theorem~\ref{thm:stability}.

\begin{corollary}
If the radii functions are all linear, i.e. if there are positive constants $r_{x}$ and $s_{y}$ for all $x \in X$ and $y \in Y$ such that $r_{x}(t) = r_{x} t$ and $s_{y}(t) = s_{y} t$, then \[\|f_{X,r}-f_{Y,s}\|_K\le \diam(K) \max_{(x,y) \in \eta} \left|\frac{1}{r_{x}}- \frac{1}{s_{y}}\right|+ \|\eta\| \max\left( \max_{x \in X} \frac{1}{r_{x}} , \max_{y \in Y} \frac{1}{s_{y}}\right)\hbox{.}\]

\end{corollary}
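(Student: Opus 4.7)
The plan is to derive this corollary as a direct specialization of Theorem~\ref{thm:stability} to the case of linear radius functions. The only work is to compute the two quantities $D(\mathbf{r},\mathbf{s})_{\eta,K}$ and $\max(S(\mathbf{r})_K, S(\mathbf{s})_K)$ explicitly for the linear case, then substitute.

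First I would observe that when $\mathbf{r}_x(t) = r_x t$, the inverse is the linear function $\mathbf{r}_x^{-1}(u) = u/r_x$ on $[0,\infty)$, so its derivative is the constant $1/r_x$. The analogous statement holds for $\mathbf{s}_y^{-1}(u) = u/s_y$ with derivative $1/s_y$.

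Next I would compute $D(\mathbf{r},\mathbf{s})_{\eta,K}$. For each $(x,y) \in \eta$ and each $u \in [0,\diam(K)]$,
\[|\mathbf{r}_x^{-1}(u) - \mathbf{s}_y^{-1}(u)| = u\,\left|\tfrac{1}{r_x} - \tfrac{1}{s_y}\right|,\]
which is maximized at the right endpoint $u = \diam(K)$. Taking the max over $(x,y) \in \eta$ yields
\[D(\mathbf{r},\mathbf{s})_{\eta,K} = \diam(K) \cdot \max_{(x,y)\in\eta}\left|\tfrac{1}{r_x} - \tfrac{1}{s_y}\right|.\]
Since the derivatives $(\mathbf{r}_x^{-1})'$ and $(\mathbf{s}_y^{-1})'$ are constant in $u$, we also immediately have $S(\mathbf{r})_K = \max_{x\in X} 1/r_x$ and $S(\mathbf{s})_K = \max_{y \in Y} 1/s_y$, so
\[\max(S(\mathbf{r})_K, S(\mathbf{s})_K) = \max\!\left(\max_{x\in X} \tfrac{1}{r_x},\ \max_{y\in Y} \tfrac{1}{s_y}\right).\]

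Finally I would substitute these two expressions into the conclusion of Theorem~\ref{thm:stability} to obtain the stated inequality verbatim. There is no real obstacle here: all three quantities appearing in Theorem~\ref{thm:stability} have elementary closed forms under the linearity hypothesis, and the corollary is simply the record of that substitution.
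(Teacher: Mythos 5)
Your proposal is correct and is exactly the specialization the paper intends: the paper states this as an immediate corollary of Theorem~\ref{thm:stability} with no written proof, and your computation of $D(\mathbf{r},\mathbf{s})_{\eta,K}$ and $\max(S(\mathbf{r})_K,S(\mathbf{s})_K)$ for linear radius functions supplies precisely the substitution being taken for granted.
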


For our next two corollaries, let $X$ and $Y$ have the same cardinality and let $m : X \to Y$ be a bijection.  We now consider each point $x \in X$ as being perturbed to a point $m(x) \in Y$ and hence set $\eta = \{ (x,m(x)) : x \in X\}$. We have the following point stability result in which the points are perturbed but the weight functions stay the same.  

\begin{corollary}(Point-stability)
If only the locations of the points are perturbed and the radius functions stay the same, i.e. $\mathbf{s}_{m(x)}(t) = \mathbf{r}_{x}(t)$ for all $x \in X$, then
\[\|f_{X,\mathbf{r}}-f_{Y,\mathbf{s}}\|_K\le \max_{x \in X} d(x,m(x)) \left\|(\mathbf{r}^{-1}_{x})'\right\|_{[0,\diam(k)]}\hbox{.}\]
\end{corollary}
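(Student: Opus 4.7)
The plan is to derive this as an immediate specialization of Theorem~\ref{thm:stability} by observing that the hypothesis $\mathbf{s}_{m(x)}(t) = \mathbf{r}_x(t)$ forces the first term in that bound to vanish. I would take the relation to be $\eta = \{(x, m(x)) : x \in X\}$, which qualifies for Theorem~\ref{thm:stability} because $m$ is a bijection.

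Next I would unwind each quantity appearing in the bound of Theorem~\ref{thm:stability}. From $\mathbf{s}_{m(x)} = \mathbf{r}_x$ we get $\mathbf{s}^{-1}_{m(x)} = \mathbf{r}^{-1}_x$, so for every pair $(x, m(x)) \in \eta$ the difference $\mathbf{r}^{-1}_x - \mathbf{s}^{-1}_{m(x)}$ is identically zero on $[0,\diam(K)]$; hence $D(\mathbf{r}, \mathbf{s})_{\eta, K} = 0$. By definition $\|\eta\| = \max_{x \in X} d(x, m(x))$, and because the family $\{\mathbf{s}^{-1}_y : y \in Y\}$ is just a relabeling of $\{\mathbf{r}^{-1}_x : x \in X\}$, we have $S(\mathbf{s})_K = S(\mathbf{r})_K = \max_{x \in X} \|(\mathbf{r}^{-1}_x)'\|_{[0,\diam(K)]}$. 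Substituting into Theorem~\ref{thm:stability} immediately yields the stated inequality.

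There is no real obstacle here; the proof is a definition chase. The only subtle point worth remarking on is the reading of the right-hand side of the corollary: a bare application of Theorem~\ref{thm:stability} naturally produces the product of two separate maxima, $\bigl[\max_{x} d(x,m(x))\bigr]\cdot\bigl[\max_{x}\|(\mathbf{r}^{-1}_x)'\|_{[0,\diam(K)]}\bigr]$, which already dominates the per-index product $\max_{x}\bigl[d(x,m(x))\,\|(\mathbf{r}^{-1}_x)'\|_{[0,\diam(K)]}\bigr]$. If the intended statement is the tighter per-index bound, one can recover it by inspecting the proof of Theorem~\ref{thm:stability}: the index $x'\in X$ chosen there so that $(x',y)\in\eta$ is forced, in the present setting, to equal $m^{-1}(y)$, so the mean value theorem step produces the bound $d(x',m(x'))\,\|(\mathbf{r}^{-1}_{x'})'\|_{[0,\diam(K)]}$ for one specific index, and taking the maximum over the attaining $x'$ gives the corollary as stated.
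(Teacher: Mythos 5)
Your proof is correct and takes essentially the same route as the paper: the authors likewise follow the proof of Theorem~\ref{thm:stability}, choose $x'$ with $m(x')=y$ so that $\mathbf{s}_y=\mathbf{r}_{x'}$ and the first term of the bound in \eqref{eq:ineq} vanishes, and bound the second term by $d(x',m(x'))\,\|(\mathbf{r}_{x'}^{-1})'\|_{[0,\diam(K)]}$ --- exactly the ``inspect the proof'' step you describe for obtaining the tighter per-index maximum. Your preliminary black-box application of the theorem (yielding the product of two separate maxima) is a harmless extra, and your observation that the per-index reading of the right-hand side requires going inside the proof rather than just quoting the theorem is accurate.
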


\begin{proof}
We follow the proof of Theorem~\ref{thm:stability}.  Take $x' \in X$ such that $m(x')=y$.  Then $\mathbf{S}_y = \mathbf{r}_{x'}$ and the first term in the upper bound in inequality \eqref{eq:ineq} is $0$.  Since the second term in that upper bound is bounded above by $d(x',m(x')) \|(\mathbf{r}_{x'}^{-1})'\|_{[0,\diam(K)]}$, the bound of the corollary holds.
\end{proof}

The next corollary is a weight-function stability result concerning a case in which the points stay the same ($Y = X$ and $m(x)=x$) but the weight functions are perturbed.
\begin{corollary}(Weight-function stability)
If only the radii functions are perturbed and the points stay the same then, 
\[\|f_{X,\mathbf{r}}-f_{X,\mathbf{s}}\|_K\le \max_{x \in X} \|\mathbf{r}_{x}^{-1}-\mathbf{s}_{x}^{-1}\|_{[0,\diam(K)]}\hbox{.}\]
\end{corollary}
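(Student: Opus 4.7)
The plan is to apply Theorem~\ref{thm:stability} directly with $Y = X$ and the relation $\eta = \{(x,x) : x \in X\}$, which is valid since each point of $X$ (resp.\ $Y$) has a partner in $Y$ (resp.\ $X$) under this relation. So this corollary is really just a substitution into the general stability theorem.

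First I would compute $\|\eta\|$ in this setting. Since every pair in $\eta$ is of the form $(x,x)$, we have $d(x,x) = 0$, and therefore $\|\eta\| = 0$. This kills the second term of the upper bound in Theorem~\ref{thm:stability}, so the maximum slope quantity $\max(S(\mathbf{r})_K, S(\mathbf{s})_K)$ does not enter the final bound — this is what lets us drop the dependence on the derivatives of the radii functions that appears in the general theorem.

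Next I would evaluate the first term. By definition,
\[D(\mathbf{r},\mathbf{s})_{\eta,K} = \max_{(x,y) \in \eta} \|\mathbf{r}_x^{-1} - \mathbf{s}_y^{-1}\|_{[0,\diam(K)]} = \max_{x \in X} \|\mathbf{r}_x^{-1} - \mathbf{s}_x^{-1}\|_{[0,\diam(K)]},\]
since the only pairs in $\eta$ are $(x,x)$. Plugging these two evaluations into the inequality of Theorem~\ref{thm:stability} gives precisely the bound of the corollary.

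I don't expect any real obstacle here: the argument is a one-line specialization of the main stability theorem, parallel in spirit to how Corollary~4.3 (point-stability) zeroes out the first term by choosing $\mathbf{s}_{m(x)} = \mathbf{r}_x$, here we zero out the second term by choosing $m = \mathrm{id}$ so that $\|\eta\| = 0$. If one preferred a self-contained argument, one could instead replay the proof of Theorem~\ref{thm:stability} with $x' = x = y$ in inequality~\eqref{eq:ineq}, which immediately forces the mean-value-theorem term to vanish and leaves only $|\mathbf{r}_x^{-1}(d(z,x)) - \mathbf{s}_x^{-1}(d(z,x))|$, bounded by the claimed maximum.
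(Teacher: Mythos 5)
Your proof is correct and matches the paper's: the paper likewise specializes the stability theorem to $Y=X$ with the identity matching so that the mean-value-theorem term in inequality~\eqref{eq:ineq} vanishes and only the $D(\mathbf{r},\mathbf{s})_{\eta,K}$ term survives. Your primary route (substituting $\|\eta\|=0$ directly into the statement of Theorem~\ref{thm:stability}) is, if anything, slightly cleaner than the paper's replay of the proof, and your fallback argument is exactly what the paper does.
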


\begin{proof}
Again following the proof of Theorem~\ref{thm:stability} we take $x'=m(x')=y$.  Now the second term in the upper bound of \eqref{eq:ineq} is $0$ and the first term is $|\mathbf{r}^{-1}_{y}-\mathbf{s}^{-1}_{y}|$ where $t=d(z,y)$. The corollary follows.
\end{proof}

We now show the stability of the persistence diagrams of $f_{X,\mathbf{r}}$ under perturbations of $X$ and $\mathbf{r}$.  Let $f: K \to [0,\infty)$ be a real valued function on a compact set $K \subseteq \mathbb{R}^d$. The \df{persistence diagram} of $f$, $\dgm(f)$, is a multi-set of points in $[0, +\infty]^2$ recording the appearance and disappearance of homological features in $f^{-1}([0,t])$ as $t$ increases.  Each point $(b,d)$ in the diagram tracks a single homological feature, recording the scale $t=b$ at which the feature first appears and the scale $t=d$ at which it disappears \cite{Ede:10}. It should also be noted that if one considers the birth-death pair as an interval we obtain the \df{barcode} as seen in \cite{Zom:05} (see Figures 2 and 3).  Given two functions $f,g : K \to [0, \infty]$ let $P = \dgm(f)$ and $Q = \dgm(g)$ be the corresponding persistence diagrams (where as usual we include all points along the diagonal in $P$ and $Q$). We let $N$ denote the set of all bijections from $P$ to $Q$. We recall that the \df{bottleneck distance} between the diagrams~\cite{Ede:10} is given by
\[
d_B(\dgm(f),\dgm(g)) = \inf_{\gamma \in N} \sup_{x \in P} ||x - \gamma(x)||_\infty\text{.}
\]
We have 
\begin{theorem}\cite[Theorem 6.9]{CSEH} Suppose $\mathcal{X}$ is a triangulable space and that $f:\mathcal{X}\to \mathbb{R}$ and $g:\mathcal{X}\to \mathbb{R}$ are tame, continuous functions. If $|f-g|$ is bounded, then for each $n$,
\[d_B(\dgm_n(f), \dgm_n(g))\le \|f-g\|_\infty\]
where $d_B$ denotes the bottleneck distance and $\dgm_n(f)$ denotes the $n$-th persistence diagram of the filtration of $f$.
\end{theorem}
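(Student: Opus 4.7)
The plan is to reduce the bottleneck bound to a purely algebraic statement about persistence modules and then handle that statement via an interpolation argument. Set $\epsilon = \|f-g\|_\infty$. The pointwise inequality $|f(x)-g(x)| \le \epsilon$ gives immediate inclusions of sublevel sets
\[
f^{-1}([0,t]) \subseteq g^{-1}([0, t+\epsilon]) \subseteq f^{-1}([0, t+2\epsilon]),
\]
together with the symmetric chain obtained by swapping $f$ and $g$. Applying $n$-th simplicial (or singular) homology to these inclusions produces linear maps between the persistence modules $\mathbb{F}: t \mapsto H_n(f^{-1}([0,t]))$ and $\mathbb{G}: t \mapsto H_n(g^{-1}([0,t]))$ that commute with the internal filtration maps, and whose compositions recover the internal map of each module shifted by $2\epsilon$. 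That is exactly the assertion that $\mathbb{F}$ and $\mathbb{G}$ are $\epsilon$-interleaved.

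The second step is to deduce from an $\epsilon$-interleaving that $d_B(\dgm_n(f), \dgm_n(g)) \le \epsilon$. The approach is to interpolate linearly. Define $f_\alpha = (1-\alpha) f + \alpha g$ for $\alpha \in [0,1]$; then $\|f_\alpha - f_{\alpha'}\|_\infty \le |\alpha - \alpha'| \cdot \epsilon$, so the persistence modules of $f_\alpha$ and $f_{\alpha'}$ are $|\alpha - \alpha'|\epsilon$-interleaved by the previous step. Tameness of $f$ and $g$ ensures that each $\dgm_n(f_\alpha)$ has only finitely many off-diagonal points, and that these points vary continuously in $\alpha$ away from finitely many critical parameter values, where pairs can be born on the diagonal, die into the diagonal, or exchange partners. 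Between two consecutive critical values one obtains a canonical ``Easy Bijection'' matching birth-death pairs at parameter $\alpha$ with those at $\alpha'$ whose $\ell^\infty$-displacement is bounded by $|\alpha - \alpha'|\epsilon$. Concatenating these partial bijections across $[0,1]$ and sending any pair that collides with the diagonal to the nearest diagonal point produces a single bijection $\gamma: \dgm_n(f) \to \dgm_n(g)$ with $\sup_{x}\|x - \gamma(x)\|_\infty \le \epsilon$.

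The main obstacle is making the matching across the critical parameters of the interpolation both well-defined and globally coherent: one must verify that pairs colliding with the diagonal can be sent to diagonal points without inflating the $\ell^\infty$-displacement, that pairs born after $\alpha = 0$ are matched consistently with pairs that die before $\alpha = 1$, and that the piecewise bijections on each subinterval can be spliced into a single bijection. This is the content of the Easy Bijection / Box Lemma of Cohen-Steiner--Edelsbrunner--Harer, and the tameness hypothesis is precisely what reduces the argument to a finite combinatorial bookkeeping problem; dropping tameness would force one to replace the interpolation argument with the more delicate algebraic stability (isometry) theorem for pointwise finite-dimensional persistence modules.
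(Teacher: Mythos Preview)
The paper does not prove this theorem at all: it is quoted verbatim as \cite[Theorem 6.9]{CSEH} and used as a black box to derive Corollary~4.6 from Theorem~\ref{thm:stability}. There is therefore nothing in the paper to compare your argument against.

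That said, what you have written is a faithful outline of the original Cohen-Steiner--Edelsbrunner--Harer proof: the sublevel-set inclusions give an $\epsilon$-interleaving, and the passage from interleaving to bottleneck bound goes through the straight-line homotopy $f_\alpha=(1-\alpha)f+\alpha g$, with tameness guaranteeing finitely many critical parameters and the Box/Easy Bijection Lemma handling the matching across them. Your identification of the delicate point---splicing the piecewise bijections and controlling what happens at the diagonal---is accurate; this is exactly where the technical work in \cite{CSEH} lives, and your sketch correctly defers to it rather than reproving it. One minor quibble: you should also note that each $f_\alpha$ is itself tame (this is not automatic from tameness of $f$ and $g$ alone, and in the original argument triangulability is used to guarantee it), since your interpolation step needs the diagrams $\dgm_n(f_\alpha)$ to be well-defined and finite for every $\alpha$.
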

We refer to \cite{Ede:10} for the technical definitions of tame and triangulable.  Note that as our spaces are nerves of balls around finite collections of points, they are finite simplicial complexes.  Hence they are triangulable and only admit tame functions.  Thus for our setting we get the following corollary.
\begin{corollary}
Let $X$ and $Y$ be finite subsets of $\mathbb{R}^d$ and let  $\mathbf{r}:X \to {\mathcal C}^1_+$ and $\mathbf{s} : Y \to {\mathcal C}^1_+$.  Suppose that $\eta\subseteq X\times Y$ is a relation as above and $K$ is a compact subset of $\mathbb{R}^d$ containing $X$ and $Y$.  Then for each $n$,
\[d_B(\dgm_n(f_{X,\mathbf{r}}),\dgm_n(f_{Y,\mathbf{s}})) \le D(\mathbf{r},\mathbf{s})_{\eta,K}+ \|\eta\| \max( S(\mathbf{r})_K, S(\mathbf{s})_K)\hbox{.}\] 
\end{corollary}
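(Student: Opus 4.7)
The plan is to combine the entry-function stability bound of Theorem~\ref{thm:stability} with the classical bottleneck stability theorem (Theorem 4.5, cited from \cite{CSEH}). The combination is almost immediate once the hypotheses of the classical theorem are checked, so the write-up should be short.

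First I would set up the space on which the classical stability theorem is to be applied. Both $f_{X,\mathbf{r}}$ and $f_{Y,\mathbf{s}}$ are continuous functions from $\mathbb{R}^d$ to $[0,\infty)$ by the assumption that each $\mathbf{r}_x,\mathbf{s}_y \in \mathcal{C}^1_+$, hence their inverses are continuous. The compact set $K \subseteq \mathbb{R}^d$ contains $X \cup Y$; by enlarging $K$ if necessary (this does not increase $\diam(K)$ by more than an arbitrarily small amount and so does not affect the final bound in the limit) I can take $K$ to be a triangulable compact region, for instance a closed ball or a $d$-simplex. On this triangulable space the restrictions $f_{X,\mathbf{r}}|_K$ and $f_{Y,\mathbf{s}}|_K$ are continuous with only finitely many homological critical values, since their sublevel sets are homotopy equivalent (via the Nerve Lemma) to the finite simplicial complexes $\Cech_\mathbf{r}(t)$ and $\Cech_\mathbf{s}(t)$ respectively, filtered by finitely many critical values of $t$. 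Thus the restrictions are tame in the sense of \cite{CSEH}, and for large enough $t$ the sublevel sets in $K$ agree (up to homotopy, which is all that matters for the persistence diagram) with those in $\mathbb{R}^d$. In particular, the persistence diagrams $\dgm_n(f_{X,\mathbf{r}})$ and $\dgm_n(f_{X,\mathbf{r}}|_K)$ coincide, and similarly for $\mathbf{s}$.

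Next I would invoke Theorem 4.5 on $K$ to obtain
\[
d_B(\dgm_n(f_{X,\mathbf{r}}),\dgm_n(f_{Y,\mathbf{s}})) \le \|f_{X,\mathbf{r}} - f_{Y,\mathbf{s}}\|_K.
\]
Finally, I would apply Theorem~\ref{thm:stability} directly to bound the right-hand side by $D(\mathbf{r},\mathbf{s})_{\eta,K} + \|\eta\|\max(S(\mathbf{r})_K,S(\mathbf{s})_K)$, yielding the claimed inequality.

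The only genuine obstacle is the bookkeeping that justifies using $\|\cdot\|_K$ in place of the global sup norm in Theorem 4.5; once one observes that the filtrations of $f_{X,\mathbf{r}}$ and $f_{Y,\mathbf{s}}$ are entirely determined by what happens on a large enough compact triangulable neighborhood of $X\cup Y$, the proof is a one-line concatenation of two bounds. No new estimates are required beyond those already established in Theorem~\ref{thm:stability}.
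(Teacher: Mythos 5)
Your proposal is correct and takes essentially the same route as the paper, which offers no separate proof beyond the preceding remark that the relevant sublevel sets are (homotopy equivalent to) finite simplicial complexes, hence triangulable and tame, so that the bottleneck stability theorem of Cohen-Steiner, Edelsbrunner, and Harer applies and concatenates with the bound of Theorem~\ref{thm:stability}. Your additional bookkeeping about restricting to a compact triangulable $K$ is a detail the paper glosses over, but it does not alter the argument.
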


\section{MNIST eights recognition}

In this section, we give an application of weighted persistence to a simple computer vision problem. We apply our methods to the Modified National Institute of Standards and Technology (MNIST) data set of handwritten digits. We should emphasize that this application is simply a proof of concept; our methods to detect the handwritten number 8 fall well short of state-of-the-art methods~\cite{Cir:12}.

The MNIST dataset consists of handwritten digits (0 through 9) translated into pixel information. Each data point contains a label and 784 other values ranging from 0 to 255 that correspond to a 28 by 28 grid of pixels. The values 0 through 255 correspond to the intensity of the pixels in gray-scale with 0 meaning completely black and 255 meaning completely white. Considering the digits from zero through nine, unweighted persistence would easily be able to classify these numbers as having zero, one, or two holes, provided they are written precisely; however, real handwritten digits present a challenge.  Consider an eight as in Figure~\ref{Crazy8}. Unweighted persistence would pick up on two holes, but one of those holes might be slightly too small and ultimately considered insignificant, see Figure~\ref{UnweightedBar}. Our methods are able to pick up on both holes and would count them as significant, see Figure~\ref{WeightedBar}. We chose to work with the digit eight due to its unique homology.

\begin{figure}[t]
\begin{center}
\includegraphics[height = 3cm]{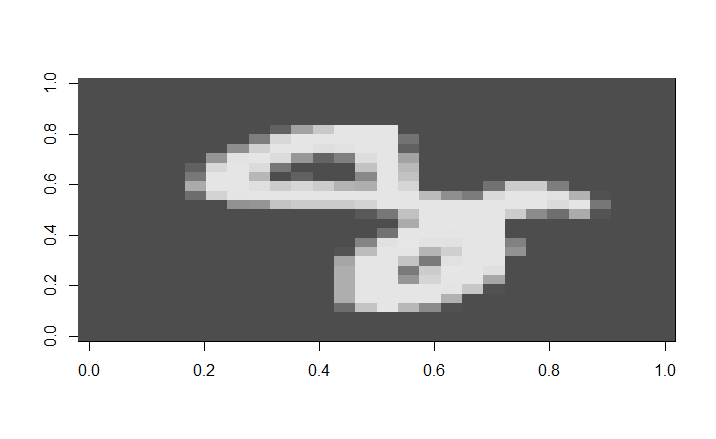}
\caption{An eight converted to a 28 by 28 grid of pixels}\label{Crazy8}
\end{center}
\end{figure}

To begin, we convert each 28 by 28 to a set of points in the plane. We treat the location of a value in the matrix as a location in the plane. That is, the value in the $i$th row, $j$th column corresponds to the point $(i,j)$. The weight on each point is exactly its corresponding pixel intensity. Using this set of points and corresponding weights we calculate persistent homology via weighted Rips complexes. We test this methods performance against the unweighted case where all nonzero pixel values have the uniform weight of 1, again we calculate persistence in this case via Rips complexes.

We compare weighted persistence to unweighted persistence by measuring accuracy of classifying eights. Notice in the barcodes that the deciding factor in determining an eight is the ability to distinguish the length of the second longest bar from the length of the third longest and smaller bars. For this reason, we consider the ratio of the third longest bar to the second longest bar. We will say (arbitrarily) that a barcode represents an eight if this ratio is less than $\frac{1}{2}$. For each of the 42,000 handwritten digits in the MNIST data set, we compute both weighted and unweighted persistence and collect the predictions. We obtain the confusion matrices as in Figure~\ref{conf-mat}.

\begin{figure}
\begin{center}
\includegraphics[height = 5cm]{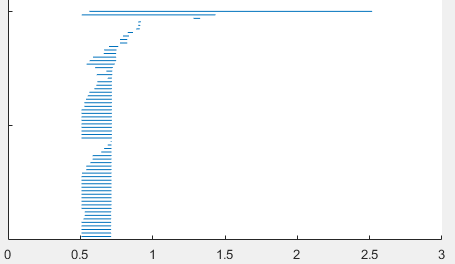}
\caption{Weighted persistence on the image from Figure 1 produces a barcode that clearly has two long bars in dimension 1.}\label{WeightedBar}
\end{center}
\end{figure}

\begin{figure}
\begin{center}
\includegraphics[height = 2.5cm]{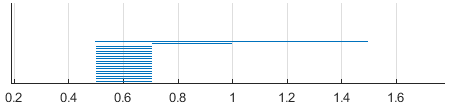}
\caption{Unweighted persistence on the image from Figure 1 produces a barcode that has one long bar (in $1$-homology). The second longest bar is hard to distinguish (in length) from the rest.}\label{UnweightedBar}
\end{center}
\end{figure}

\begin{figure}
\begin{center}
    \begin{tabular}{|c|c|c||c|c|}\hline
    & \multicolumn{2}{c||}{Weighted Persistence} & \multicolumn{2}{c|}{Unweighted Persistence}\\ \hline
    & Predicted not $8$ & Predicted $8$ & Predicted not $8$ & Predicted $8$\\ \hline
    Not $8$ & 36487 & 1450 & 35869 & 2068 \\ \hline
    Is $8$ & 633 & 3430 & 1261 & 2802\\ \hline
\end{tabular}\caption{The confusion matrices show that weighted persistence outperforms its unweighted counterpart.}\label{conf-mat}
\end{center}

\end{figure}

Notice that the weighted persistence has an accuracy rate of 95.8\% whereas unweighted persistence had an accuracy of 92.07\%. A full summary can be seen in Figure \ref{summarytable}.  We view this result as promising for potential future applications of weighted persistence. 

\begin{figure}[ht]
\begin{center}
\begin{tabular}{|c|c||c|}\hline
& Weighted Persistence & Unweighted Persistence \\ \hline
Accuracy & 0.9504 & 0.9207\\ \hline
Sensitivity & 0.9618 & 0.9455\\ \hline
Specificity & 0.8442 & 0.6896\\ \hline

Pos. Pred. Value & 0.9829 & 0.966\\ \hline

Neg. Pred. Value & 0.7029 & 0.5754\\ \hline

Prevalence & 0.9033 & 0.9033\\ \hline

Balanced Accuracy & 0.903 & 0.8176 \\ \hline

\end{tabular}
\caption{Weighted and unweighted persistence compared.}\label{summarytable}
\end{center}
\end{figure}
\section{Concluding remarks and open questions} 
The method of weighted persistence satisfies the appropriate Vietoris-Rips Lemma, is stable under small perturbations of the points, or the weights, or both, and can be successfully applied to data such as the MNIST data set to improve upon usual persistence. Furthermore, it is just as easy to calculate weighted persistence for balls growing at linear rates, as it is to calculate regular persistence. We conclude the paper with some further observations and questions.

One can imagine weighted persistence as interpolating between two extreme approaches to a data set that is partitioned into data $D$ and noise $N$. More precisely, we consider a noisy data set $X$. Various methods exist to filter $X$ into data $D$ and noise $N$. Traditional persistence can be applied to $D\cup N$ in two ways. We can either assign the same radius to every point of $D\cup N$ or we can throw the points of $N$ out entirely and compute persistence on $D$ alone. Using weighted persistence, we can assign the radius $0$ to each point of $N$ and compute weighted persistence of $D\cup N$. It is easy to see that this will differ from persistence of $D$ itself only in dimension $0$. By gradually increasing the $N$-radii from $0$ to $1$, our stability results can be interpreted as producing a continuum of barcodes/persistence diagrams that interpolate between the usual persistence applied to $D$ and the usual persistence applied to $D\cup N$ (in dimensions above $0$), see~\cite{Austin}.

As mentioned in the introduction, weighted persistence fits into the framework of generalized persistence in the sense of \cite{BdSS}. This direction was explored in detail in \cite{Josh}.

Finally, it would be interesting to apply weighted persistence to the MNIST data set to determine its effectiveness in distinguishing the $1$-homology of the other nine digits. One complication is that the number $4$ presents an interesting challenge since it is appropriate to write it both as a simply connected space and as a space with non-trivial $H_1$. Distinguishing $1$-homology creates 3 clusters of digits from which we could use other machine learning techniques to create an ensemble and make accurate predictions.
\bibliography{references}
\end{document}